 \theoremstyle{plain}
\newtheorem{thm}{Theorem}[section]
  \theoremstyle{plain}
  \newtheorem{prop}[thm]{Proposition}
  \theoremstyle{plain}
  \newtheorem{lem}[thm]{Lemma}
 \theoremstyle{definition}
 \newtheorem*{defn*}{Definition}
  \theoremstyle{plain}
  \newtheorem{cor}[thm]{Corollary}
  \theoremstyle{remark}
  \newtheorem{rem}[thm]{Remark}
\DeclareMathOperator{\diam}{diam}
\DeclareMathOperator{\card}{card}
\newcommand{\Z}{\mathbb{Z}}
\newcommand{\R}{\mathbb {R}}
\newcommand{\N}{\mathbb {N}}
\renewcommand{\1}{\mathbbm{1}}
\renewcommand{\d}{\mathrm {d}}
\renewcommand{\hat}{\widehat}
\renewcommand{\phi}{\varphi}
\renewcommand{\epsilon}{\varepsilon}
\DeclareMathOperator*{\starlim}{\ast-lim}
\begin{document}
\selectlanguage{english}

\title[A dichotomy between uniform distributions]{A dichotomy between uniform distributions of the  Stern-Brocot 
and  the  Farey sequence}

\author{Marc Kesseböhmer}

\address{Fachbereich 3 -- Mathematik und Informatik, Universität Bremen, Bibliothekstr.
1, D--28359 Bremen, Germany}

\email{mhk@math.uni-bremen.de}

\author{Bernd O. Stratmann}

\address{Fachbereich 3 -- Mathematik und Informatik, Universität Bremen, Bibliothekstr.
1, D--28359 Bremen, Germany}

\email{bos@math.uni-bremen.de}

\date{\today}
\begin{abstract}
We employ infinite ergodic theory to show that the
even Stern-Brocot sequence and the Farey sequence are  uniformly distributed mod
1 with respect to certain canonical weightings. As a corollary we derive
the precise asymptotic for the Lebesgue measure of continued fraction
sum-level sets as well as connections to asymptotic behaviours  of geometrically
and arithmetically restricted Poincaré series. Moreover, we give relations of our main results 
 to elementary observations for the Stern-Brocot tree.
\end{abstract}

\keywords{Uniform distribution (mod 1), Stern-Brocot sequence, Farey sequence,
infinite ergodic theory}

\subjclass[2000]{Primary 10K05; Secondary 11B57}

\maketitle

\section{Introduction and statements of result}

 In this paper we consider weighted uniform distributions (mod 1)  for the following two canonical sequences:
the {\em Farey sequence} $\left(\mathcal{F}_{n}\right)_{n \in \N}$ which is given
by \[
\mathcal{F}_{n}:=\left\{ p/q:0<p\leq q\leq n, \mbox{gcd}(p,q)=1\right\}, \]
 and the \emph{even Stern-Brocot sequence}  $\left(\mathcal{S}_{n}\right)_{n \in \N}$ which is given by \[
\mathcal{S}_{n}:=\left\{ s_{n,2k}/t_{n,2k}:k=1,\ldots,2^{n-1}\right\} , \]
where  the integers $s_{n,k}$ and $t_{n,k}$
 are defined recursively by
\begin{itemize}
\item[] $s_{0,1}:=0\, \,$ and $\, \, s_{0,2}:=t_{0,1}:=t_{0,2}:=1$; 
\item[] $s_{n+1,2k-1}:=s_{n,k}\quad\textrm{and}\quad t_{n+1,2k-1}:=t_{n,k},$
for $k=1,\ldots,2^{n}+1$; 
\item[] $s_{n+1,2k}:=s_{n,k}+s_{n,k+1}\quad\textrm{and}\quad t_{n+1,2k}:=t_{n,k}+t_{n,k+1}$,
for $k=1,\ldots2^{n}$. 
\end{itemize}
\noindent The following theorem states the main results of this paper, where $\delta_{x}$ denotes the Dirac distribution at $x \in [0,1]$, $\starlim$
the weak limit of measures, and $\lambda$  the Lebesgue
measure on $\left[0,1\right]$. Note that, throughout, all appearing fractions will always be assumed to be reduced. 
\begin{thm} 
\label{Thm:atomic2} 
For the  even Stern-Brocot sequence we have that 
\begin{eqnarray}
\label{Stern-Brocot}
\starlim_{n \to \infty} \quad  \log (n^2)\sum_{p/q\in\mathcal{S}_{n}}\,
q^{-2}\,\delta_{p/q} =\lambda,\end{eqnarray}
and for the Farey sequence we have that 
\begin{equation}
\label{Farey}
\starlim_{n \to \infty}\quad \frac{\zeta(2)}{\log n}
\sum_{p/q\in\mathcal{F}_{n}}\,q^{-2}\,\delta_{p/q}= 
\lambda. 
\end{equation}
\end{thm}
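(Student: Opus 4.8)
The engine behind both limits is the Farey map $F\colon[0,1]\to[0,1]$, defined by $F(x)=x/(1-x)$ on $[0,\tfrac12]$ and $F(x)=(1-x)/x$ on $[\tfrac12,1]$, with inverse branches $v_{0}(x)=x/(1+x)$, $v_{1}(x)=1/(1+x)$, and its transfer operator $\widehat F$ with respect to $\lambda$, so that writing $v_{\omega}:=v_{\omega_{1}}\circ\cdots\circ v_{\omega_{m}}$ for $\omega=\omega_{1}\cdots\omega_{m}\in\{0,1\}^{m}$ one has $\widehat F^{\,m}f=\sum_{\omega\in\{0,1\}^{m}}|v_{\omega}'|\,(f\circ v_{\omega})$. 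The map $F$ has an indifferent fixed point at $0$ with $F(x)=x+x^{2}+O(x^{3})$, and preserves the infinite measure of density $h(x)=1/x$; the input from infinite ergodic theory is Thaler's uniform Darling--Kac theorem, which in the present situation reads
\begin{equation*}
\lim_{n\to\infty}\ (\log n)\,\widehat F^{\,n}f(x)=\frac1x\int_{0}^{1}f\,\d\lambda
\end{equation*}
uniformly for $x$ in compact subsets of $(0,1]$, for every Riemann integrable $f$. The normalisation by $\log n$ is the asymptotic wandering rate of $F$, dictated by the order of contact $F(x)-x\sim x^{2}$ at $0$; this is what ultimately produces the constants $\log(n^{2})$ and $\zeta(2)/\log n$.

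For \eqref{Stern-Brocot} the point is an exact identity valid for every $g\in C[0,1]$ and every $n\in\N$:
\begin{equation*}
\sum_{p/q\in\mathcal S_{n}}q^{-2}\,g(p/q)=\tfrac14\,\widehat F^{\,n-1}g(\tfrac12).
\end{equation*}
Indeed, the $2^{n-1}$ intervals $v_{\omega}([0,1])$, $\omega\in\{0,1\}^{n-1}$, are precisely the intervals of the $(n-1)$-st Stern--Brocot partition; the two endpoints of $v_{\omega}([0,1])$ are $v_{\omega}(0)$ and $v_{\omega}(1)$, consecutive Stern--Brocot fractions of level $\le n-1$, and their mediant --- which by construction is the unique element of $\mathcal S_{n}$ lying in $v_{\omega}([0,1])$ --- equals $v_{\omega}(\tfrac12)$, while realising $v_{\omega}$ as an integer matrix of determinant $\pm1$ gives $|v_{\omega}'(\tfrac12)|=4q^{-2}$, with $q$ the denominator of that mediant. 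Summing over the $2^{n-1}$ branches yields the identity (and a bijection $\{0,1\}^{n-1}\to\mathcal S_{n}$). Applying Thaler's theorem at the interior point $\tfrac12$, where $h(\tfrac12)=2$, gives $(\log n)\,\widehat F^{\,n-1}g(\tfrac12)\to 2\int_{0}^{1}g\,\d\lambda$, hence $\log(n^{2})\sum_{p/q\in\mathcal S_{n}}q^{-2}g(p/q)=2(\log n)\cdot\tfrac14\widehat F^{\,n-1}g(\tfrac12)\to\int_{0}^{1}g\,\d\lambda$, which is \eqref{Stern-Brocot}.

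For \eqref{Farey} I would argue more arithmetically. Grouping Farey fractions by denominator, $\sum_{p/q\in\mathcal F_{n}}q^{-2}g(p/q)=\sum_{q=1}^{n}q^{-2}\Sigma_{q}(g)$ with $\Sigma_{q}(g):=\sum_{0<p\le q,\ \gcd(p,q)=1}g(p/q)$. Testing against the characters $g(x)=e^{2\pi\mathrm i kx}$ turns $\Sigma_{q}(g)$ into a Ramanujan sum of size $O(\gcd(q,k))=O_{k}(1)$ for $k\ne0$, so that $\Sigma_{q}(g)=\phi(q)\int_{0}^{1}g\,\d\lambda+o(\phi(q))$ as $q\to\infty$, first for trigonometric polynomials and then for all $g\in C[0,1]$ by uniform approximation. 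On the other hand, from $\sum_{q\ge1}\phi(q)q^{-s}=\zeta(s-1)/\zeta(s)$ one obtains by partial summation $\sum_{q\le n}\phi(q)q^{-2}\sim\zeta(2)^{-1}\log n$. Combining the two and absorbing the error with $\phi(q)q^{-2}\le q^{-1}$ (so that $\sum_{q\le n}q^{-1}o(1)=o(\log n)$) yields $\sum_{p/q\in\mathcal F_{n}}q^{-2}g(p/q)=\zeta(2)^{-1}(\log n)\int_{0}^{1}g\,\d\lambda+o(\log n)$, which is \eqref{Farey} after multiplying by $\zeta(2)/\log n$.

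The substantive obstacle for \eqref{Stern-Brocot} is securing Thaler's convergence in the stated uniform form and confirming its use at the fixed interior point $\tfrac12$ --- harmless in itself, since $\tfrac12$ keeps a definite distance from the indifferent fixed point, but it requires correctly identifying the invariant density $1/x$ \emph{and} the exact wandering rate $\log n$, which is what pins down the normalising factor. For \eqref{Farey} the obstacle is the compatibility of the two limiting processes --- equidistribution of same-denominator Farey points, and the harmonic summation over $q$ --- which is why the equidistribution must be made quantitative through the Ramanujan-sum bound rather than used qualitatively. As a sanity check, taking $g\equiv1$ already forces both total masses to tend to $\lambda([0,1])=1$, which is exactly what selects the constants $\log(n^{2})=2\log n$ and $\zeta(2)/\log n$.
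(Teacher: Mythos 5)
Your proposal is correct, but it reaches both limits by routes genuinely different from the paper's. For (\ref{Stern-Brocot}) the paper does not apply Thaler's operator asymptotics directly: it first proves Proposition \ref{thm:local} (weak convergence of $\log n\cdot\lambda|_{T^{-n}([\alpha,\beta])}$) by a moment argument based on the Kesseb\"ohmer--Slassi notion of uniformly returning sets \cite{KesseboehmerSlassi:05}, applied to the test functions $\phi_t(x)=x\,\mathrm{e}^{tx}$, and then passes from these absolutely continuous measures to the atomic measures of Theorem \ref{Thm:interval} via a bounded-distortion comparison (Lemma \ref{diam}) combined with the growth estimate of Lemma \ref{cor:GrowthRateBound}; the case $v/w=1/2$, using $\mathcal{S}_n=T^{-(n-1)}(1/2)$, then gives (\ref{Stern-Brocot}). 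Your exact identity $\sum_{p/q\in\mathcal{S}_n}q^{-2}g(p/q)=\tfrac14\,\mathcal{L}^{n-1}g(1/2)$ (equivalent to the relation $|(T^{n-1})'(p/q)|=q^{2}/4$ recorded in the paper's remark on the Perron--Frobenius operator) short-circuits all of this: together with Thaler's theorem \cite{Thaler:00} for the Farey map (invariant density $1/x$, wandering rate $\log n$, and $1/2$ safely away from the indifferent fixed point) it yields (\ref{Stern-Brocot}) in a few lines, with the correct constants. What you give up is exactly what the paper's longer route buys: Proposition \ref{thm:local} and the full Theorem \ref{Thm:interval}, which the paper needs for its corollaries; also note that ``uniform Darling--Kac'' is a misnomer (the relevant result is Thaler's uniform convergence theorem for iterates of the transfer operator), though the statement you quote is the right one and you use it legitimately as a black box, whereas the paper in effect reproves the needed instance of it. For (\ref{Farey}) the paper takes the classical equidistribution of the full sequence $\mathcal{F}_n$ (Landau \cite{Landau}, Mikol\'as \cite{Mikolas}) as input and converts it with Toeplitz's lemma and a partial-summation comparison of $\sum_{k=m}^{n}k^{-3}$ with an integral; you instead re-prove equidistribution denominator by denominator, quantitatively, via the Ramanujan-sum bound $|c_q(k)|\le\gcd(q,k)$, and then sum using $\sum_{q\le n}\phi(q)q^{-2}\sim\zeta(2)^{-1}\log n$ --- a sound and self-contained alternative. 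Two small points to tidy there: trigonometric polynomials are dense only among continuous \emph{periodic} functions, so the extension to all of $C[0,1]$ needs the routine extra remark that the normalised measures give mass $O(\epsilon)+o(1)$ to $\epsilon$-neighbourhoods of the endpoints; and the error absorption should be phrased as $\sum_{q\le n}\epsilon_q/q=o(\log n)$ for $\epsilon_q\to0$, which is what your inequality $\phi(q)q^{-2}\le q^{-1}$ indeed delivers.
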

In fact, for the derivation of the assertion in  (\ref{Stern-Brocot}) we will show that the following more general measure theoretical result
holds. In here, $T:[0,1] \to [0,1]$ denotes the {\em Farey map} defined by \[
T\left(x\right):=\left\{ \begin{array}{lll}
x/(1-x) & \,\,\textrm{for} & x\in\left[0,1/2\right]\\
(1-x)/x & \,\,\textrm{for} & x\in\left(1/2,1\right].\end{array}\right.\]
\begin{thm} 
\label{Thm:interval} 
For each  rational number $v/w\in\left(0,1\right]$ we have that \begin{equation}
 \starlim_{n \to \infty} \quad  \log (n^{vw})  \sum_{p/q\in T^{-n}\left\{ v/w\right\} }\, q^{-2}\, \delta_{p/q}=\lambda. \label{eq:Stronger}\end{equation}
\end{thm}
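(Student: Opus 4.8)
The plan is to express the measure on the left of \eqref{eq:Stronger} through the Perron--Frobenius operator of the Farey map with respect to Lebesgue measure, and then to invoke the known asymptotic decay of its iterates. Write $\widehat{T}$ for this operator. Since the two monotone inverse branches of $T$ are the M\"obius maps $\psi_{0}(x):=x/(1+x)$, with $\psi_{0}([0,1])=[0,1/2]$, and $\psi_{1}(x):=1/(1+x)$, with $\psi_{1}([0,1])=[1/2,1]$, one has
\[
\widehat{T}f(x)=\frac{1}{(1+x)^{2}}\Bigl(f\bigl(\tfrac{x}{1+x}\bigr)+f\bigl(\tfrac{1}{1+x}\bigr)\Bigr),\qquad\widehat{T}^{\,n}f(x)=\sum_{\omega\in\{0,1\}^{n}}\bigl|\psi_{\omega}'(x)\bigr|\,f\bigl(\psi_{\omega}(x)\bigr),
\]
where $\psi_{\omega}:=\psi_{\omega_{1}}\circ\dots\circ\psi_{\omega_{n}}$. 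The arithmetic input is that each $\psi_{\omega}$ is given by a $2\times2$ integer matrix $\left(\begin{smallmatrix}a&b\\c&d\end{smallmatrix}\right)$ with non-negative entries and determinant $\pm1$; hence, for a reduced fraction $v/w\in(0,1]$, the point $\psi_{\omega}(v/w)=(av+bw)/(cv+dw)$ is again a reduced fraction (the common divisor of $av+bw$ and $cv+dw$ equals $\gcd(v,w)=1$, since $\left(\begin{smallmatrix}a&b\\c&d\end{smallmatrix}\right)\in\mathrm{GL}_{2}(\Z)$), its denominator is $q=cv+dw$, and $|\psi_{\omega}'(v/w)|=(c\,v/w+d)^{-2}=w^{2}/q^{2}$. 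Summing over all words of length $n$, and using that $\omega\mapsto\psi_{\omega}(v/w)$ enumerates $T^{-n}\{v/w\}$ --- bijectively when $v/w\neq1$, and two-to-one when $v/w=1$, since $\psi_{0}(1)=\psi_{1}(1)=1/2$ is the unique critical value of $T$ --- one obtains the identity
\[
\widehat{T}^{\,n}g\bigl(v/w\bigr)=w^{2}\sum_{p/q\in T^{-n}\{v/w\}}q^{-2}\,g(p/q),
\]
valid for every $g$, with preimages counted with this multiplicity.

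Testing the measure in \eqref{eq:Stronger} against $g\in C([0,1])$ and inserting the identity, together with $\log(n^{vw})=vw\log n$ and the fact that evaluation takes place at $x:=v/w$, gives
\[
\log(n^{vw})\sum_{p/q\in T^{-n}\{v/w\}}q^{-2}\,g(p/q)=\frac{vw}{w^{2}}\,\log n\cdot\widehat{T}^{\,n}g(v/w)=x\,\log n\cdot\widehat{T}^{\,n}g(x).
\]
Hence \eqref{eq:Stronger} is equivalent to the assertion $\lim_{n\to\infty}\log n\cdot\widehat{T}^{\,n}g(x)=\tfrac{1}{x}\int_{0}^{1}g\,\d\lambda$ for every rational $x\in(0,1]$ and every $g\in C([0,1])$. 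This is an instance of Thaler's limit theorem for the Perron--Frobenius operator of an interval map with an indifferent fixed point, applied here to $T$: the fixed point $0$ is indifferent, the invariant density with respect to $\lambda$ is $h(x)=1/x$ (one checks $\widehat{T}h=h$ directly), and the wandering rate of $T$ is $\sim\log n$; Thaler's theorem then yields $(\log n)\,\widehat{T}^{\,n}g\to h\cdot\int_{0}^{1}g\,\d\lambda$ uniformly on compact subsets of $(0,1]$ for Riemann integrable $g$, in particular for every continuous $g$ and at the point $x=v/w$. Since $x\,h(x)=1$, the displayed quantity converges to $\int_{0}^{1}g\,\d\lambda$; taking $g\equiv1$ shows in addition that the total masses converge to $1=\lambda([0,1])$, so this is the asserted weak-$\ast$ convergence.

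The step I expect to be the main obstacle is supplying this ergodic-theoretic input with the right normalisation: one must show that $\widehat{T}^{\,n}g$ decays like $1/(x\log n)$ uniformly on compacta of $(0,1]$, with exactly the slowly varying rate $\log n$. This rests on identifying the wandering rate of $T$ --- equivalently, on the fact that the first-return time of $T$ to a reference set such as $[1/2,1]$ has tail of order $1/n$, so that $\mu\bigl(\bigcup_{k=0}^{n-1}T^{-k}[1/2,1]\bigr)\sim\log n$ --- followed by an appeal to Thaler's uniform limit theorem. The only remaining care is combinatorial: one must check that $\omega\mapsto\psi_{\omega}(v/w)$ really does enumerate $T^{-n}\{v/w\}$, and keep track of the two-fold multiplicity at the critical value $v/w=1$. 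That multiplicity is not an inconvenience but the origin of the exponent: it is precisely why \eqref{Stern-Brocot} carries the weight $\log(n^{2})=2\log n$ rather than $\log n$, in accordance with $\mathcal{S}_{n}=T^{-n}\{1\}$.
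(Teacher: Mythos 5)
Your proof is correct, but it follows a genuinely different route from the paper's. You turn the atomic measure in (\ref{eq:Stronger}) directly into an evaluation of the Perron--Frobenius operator with respect to Lebesgue measure (the paper's $\mathcal{L}$; your $\widehat{T}$ is the paper's name for the transfer operator with respect to $\mu$, so beware the notational clash): the identity $\mathcal{L}^{n}g(v/w)=w^{2}\sum_{p/q\in T^{-n}\{v/w\}}q^{-2}g(p/q)$ is the same arithmetic fact ($|(T^{n})'(p/q)|=q^{2}/w^{2}$) that the paper records when deriving (\ref{fac:Stern2}), and your matrix argument for it, including the injectivity of $\omega\mapsto\psi_{\omega}(v/w)$ for $v/w\in(0,1)$, is sound. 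You then quote Thaler's locally uniform asymptotic $(\log n)\,\mathcal{L}^{n}g\to h\cdot\lambda(g)$ on compact subsets of $(0,1]$, with $h(x)=1/x$, from \cite{Thaler:00}; this is a legitimate use of that theorem (the Farey map is a standard member of Thaler's class, and the normalisation by the wandering rate $\log n$ is consistent with Proposition \ref{thm:local}). The paper deliberately works from a weaker input, the uniformly returning property of \cite{KesseboehmerSlassi:05}, which is available only on intervals in $[1/2,1]$ and for a cone of increasing concave functions; this is why it needs the $\phi_{t}$-concavity computation, the method of moments and the induction in the proof of Proposition \ref{thm:local}, and afterwards the bounded-distortion approximation (Lemmas \ref{cor:GrowthRateBound} and \ref{diam}) to pass from $\lambda|_{T^{-n}(\mathcal{U}_{\epsilon}(v/w))}$ to the point masses. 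Your route collapses both steps, since Thaler's theorem is already pointwise (locally uniform), at the cost of leaning on the full strength of \cite{Thaler:00} and of not producing Proposition \ref{thm:local}, which the paper wants independently (Corollary \ref{cor1}). One further point in your favour: at $v/w=1$ the branch parametrisation is two-to-one, and indeed, since $T^{-n}\{1\}=\mathcal{S}_{n}$, the convergence (\ref{Stern-Brocot}) forces the weight $\log(n^{2})$ for the set-counted sum rather than $\log(n^{vw})=\log n$; the paper's own proof, which parametrises preimages by $\Phi$ and hence only reaches $v/w\in(0,1)$, is silent on this endpoint, and your multiplicity convention is exactly what renders the $v/w=1$ case consistent with (\ref{Stern-Brocot}).
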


In a nutshell,  the proofs of these results are obtained as follows.  The convergence in (\ref{Farey}) is derived from combining {\em Toeplitz's Lemma} and a classical result by Landau \cite{Landau} and Mikol\'as \cite{Mikolas} with a well-know estimate for the {\em Euler totient function}     $\varphi(n) := \; \card \{ 1 \le m \le n : \mbox{gcd}(m,n) = 1 \} $.  Whereas,  the proof of Theorem \ref{Thm:interval},
and   consequently the proof of (\ref{Stern-Brocot}), is obtained from the following slightly more technical
 result, which will be derived by employing some  recent progress
  in  infinite ergodic theory. 
 
 \begin{prop}
\label{thm:local} For each interval $\left[\alpha,\beta\right] \subset (0,1]$ 
we have that
\[ \starlim_{n \to \infty} \quad \left(\frac{\log n}{\log\left(\beta/\alpha\right)}\cdot \lambda|_{T^{-n}([\alpha,\beta])}\right)= \lambda.\]
\end{prop}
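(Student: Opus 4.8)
The plan is to deduce Proposition~\ref{thm:local} from the known asymptotics of the transfer operator of the Farey map. Recall that the Farey map $T$ preserves the infinite measure $\d\mu = x^{-1}\,\d\lambda$, and that its transfer operator $\widehat{T}$ with respect to $\lambda$ acts as
\[
\widehat{T}f(x) = \frac{1}{(1+x)^2}\left(f\!\left(\frac{x}{1+x}\right) + f\!\left(\frac{1}{1+x}\right)\right).
\]
The key input from infinite ergodic theory is that the Farey map is pointwise dual ergodic with return sequence $a_n \sim n/\log n$; more precisely, by results of Thaler (and refinements via the Darling--Kac theorem / uniform sets), one has, uniformly on compact subsets of $(0,1]$,
\[
\log n \cdot \widehat{T}^{\,n} f \;\longrightarrow\; \left(\int_0^1 f\,\d\lambda\right)\cdot h
\]
for suitable $f$, where $h(x) = 1/x$ is the density of $\mu$ with respect to $\lambda$. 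The first step is to write $\lambda|_{T^{-n}([\alpha,\beta])}$ in operator form: for a test function $g \in C([0,1])$,
\[
\int_0^1 g \cdot \1_{T^{-n}([\alpha,\beta])}\,\d\lambda = \int_0^1 \widehat{T}^{\,n}\!\big(g \cdot (\1_{[\alpha,\beta]}\circ T^n)\big)\,\d\lambda,
\]
but it is cleaner to dualise the other way: $\int g\,\1_{T^{-n}([\alpha,\beta])}\,\d\lambda = \int (\1_{[\alpha,\beta]}) \cdot \widehat{T}^{\,n}g\; \cdot$ — wait, more precisely $\int_0^1 (g\circ?)$; the correct identity is $\int_0^1 u\cdot (v\circ T^n)\,\d\lambda = \int_0^1 v\cdot \widehat{T}^{\,n}u\,\d\lambda$, so taking $u=g$ and $v = \1_{[\alpha,\beta]}$ gives
\[
\int_0^1 g\cdot \1_{T^{-n}([\alpha,\beta])}\,\d\lambda = \int_\alpha^\beta \widehat{T}^{\,n}g\;\d\lambda.
\]

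The second step is then to apply the uniform convergence $\log n\cdot \widehat{T}^{\,n}g \to (\int_0^1 g\,\d\lambda)\cdot h$ on the compact set $[\alpha,\beta]\subset(0,1]$, which yields
\[
\log n \int_\alpha^\beta \widehat{T}^{\,n}g\,\d\lambda \;\longrightarrow\; \left(\int_0^1 g\,\d\lambda\right)\int_\alpha^\beta \frac{\d x}{x} = \left(\int_0^1 g\,\d\lambda\right)\log(\beta/\alpha).
\]
Dividing by $\log(\beta/\alpha)$ gives exactly $\starlim \big(\tfrac{\log n}{\log(\beta/\alpha)}\lambda|_{T^{-n}([\alpha,\beta])}\big)(g) = \int_0^1 g\,\d\lambda$, which is the claim. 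One has to be slightly careful that $g$ need only be continuous, whereas the ergodic-theoretic statement is typically phrased for functions Riemann integrable or in a suitable function class (e.g.\ of bounded variation, or continuous); since $C([0,1])$ is contained in the relevant class this is not an issue, and density arguments handle any gap.

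The main obstacle is locating and correctly invoking the right uniform-convergence statement for $\widehat{T}^{\,n}$. The naive Hopf ratio ergodic theorem only gives convergence along Cesàro averages $\tfrac1{a_n}\sum_{k<n}\widehat{T}^{\,k}g$, not for the individual iterates $\widehat{T}^{\,n}g$; upgrading from the averaged statement to the pointwise-in-$n$ statement requires the Farey map to satisfy a stronger mixing-type property. This is where "recent progress in infinite ergodic theory" enters — specifically Thaler's work on the asymptotics of $\widehat{T}^{\,n}$ for AFN-maps / interval maps with indifferent fixed points, which gives precisely $\log n\cdot\widehat{T}^{\,n}\1 \to h$ uniformly on compacts away from the neutral fixed point $0$, and its extension to general $g$. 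I would cite that result, verify that the Farey map meets its hypotheses (one indifferent fixed point at $0$ with the appropriate regular-variation behaviour, giving return sequence $n/\log n$), and then the two steps above are routine. A secondary point to check is uniformity of the convergence near the endpoint $\beta$ when $\beta$ could be close to — actually $\beta \le 1$ is fine since the bad point is $0$, so only $\alpha>0$ matters, and that is built into "compact subset of $(0,1]$."
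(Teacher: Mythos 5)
Your argument is correct, but it follows a genuinely different route from the paper. You reduce everything to the dual identity $\int_0^1 g\cdot\1_{T^{-n}([\alpha,\beta])}\,\d\lambda=\int_\alpha^\beta \mathcal{L}^n g\,\d\lambda$ (note that what you call $\widehat{T}$ is the paper's Perron--Frobenius operator $\mathcal{L}$ with respect to $\lambda$; the paper's $\widehat{T}$ is the transfer operator with respect to $\mu$, for which the analogous asymptotics reads $\log n\,\widehat{T}^n f\to\mu(f)$) and then invoke Thaler's theorem (Studia Math.\ 143 (2000)) that $\log n\cdot\mathcal{L}^n g\to h\cdot\int_0^1 g\,\d\lambda$ uniformly on compact subsets of $(0,1]$, with $h(x)=1/x$; since $\int_\alpha^\beta h\,\d\lambda=\log(\beta/\alpha)=\mu([\alpha,\beta])$, the proposition follows for every continuous test function in two lines. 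The paper deliberately does not use this strong black box: it only uses the weaker input that intervals inside $[1/2,1]$ are uniformly returning with $w_n=\log n$ for functions $f$ with $\widehat{T}^n f$ in the cone $\mathcal{D}$ of increasing concave $C^2$ functions (Kesseb\"ohmer--Slassi). This weaker input forces two extra devices that your proof avoids entirely: the method of moments with the special family $\phi_t(x)=x\exp(tx)$, for which one must check by hand that $\widehat{T}\phi_t\in\mathcal{D}$ for $t\in[-1,1]$, and an induction over the sets $\mathcal{E}_n=\bigcup_{k<n}T^{-k}([1/2,1])$ to pass from intervals in $[1/2,1]$ to arbitrary $[\alpha,\beta]\subset(0,1]$. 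So your approach buys brevity and uniformity in the test functions, at the price of relying on the full local-uniform iterate asymptotics of the Perron--Frobenius operator, whose hypotheses (the Farey map lies in Thaler's class, wandering rate $\sim\log n$, continuous $g$ admissible) you flag but do not verify; that verification is the one substantive item still owed, and you are right that pointwise dual ergodicity and Darling--Kac alone would not suffice, since they only control Ces\`aro averages of $\mathcal{L}^k g$ rather than the individual iterates.
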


The result in
 Proposition \ref {thm:local} has the following 
immediate elementary number theoretical  implication, which has been  the main result of \cite{KessStrat09} and which there led to the confirmation of a conjecture by Fiala and Kleban \cite{FK} (see also Remark \ref{sumlevel} following the proof of Proposition \ref {thm:local}). In particular, Proposition \ref {thm:local} hence gives rise to an alternative proof of this conjecture.
But let us first 
recall that the regular continued fraction expansion of a number $x \in (0,1]$ is given by \[
x=: [x_{1},x_{2},\ldots]:=\cfrac{1}{x_{1}+\cfrac{1}{x_{2}+\ldots}},\]
 where all the $x_{i}$ are positive integers. Also, we write $a_{n}\sim b_{n}$ if $\lim_{n\to\infty}a_{n}/b_{n}=1$.
\begin{cor}
\label{cor1} We have that
\[
\lambda\left(\left\{[x_{1},x_{2},\ldots]:\sum_{i=1}^{k}x_{i}=n,k\in\N\right\}\right)\sim\frac{1}{\log_{2}n}.\]
\end{cor}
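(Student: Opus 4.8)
The plan is to identify the continued fraction sum-level set
$\bigl\{[x_{1},x_{2},\ldots]:\sum_{i=1}^{k}x_{i}=n,\ k\in\N\bigr\}$
with a preimage set of the Farey map $T$ and then apply Proposition \ref{thm:local}.
The starting observation is the well-known relationship between $T$ and the Gauss map:
one step of the Gauss map $x\mapsto\{1/x\}$ corresponds to following the "long" branch
$x\mapsto(1-x)/x$ of $T$ once and then iterating the "short" branch $x\mapsto x/(1-x)$
until the orbit lands in $(1/2,1]$ again. Concretely, if $x=[x_{1},x_{2},\ldots]$
then $T^{j}(x)$ lands in the right branch $(1/2,1]$ precisely when $j$ equals
one of the partial sums $x_{1}, x_{1}+x_{2}, x_{1}+x_{2}+x_{3},\ldots$. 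Hence
\[
\Bigl\{x\in(0,1]:\textstyle\sum_{i=1}^{k}x_{i}=n\text{ for some }k\in\N\Bigr\}
\;=\;T^{-n}\bigl((1/2,1]\bigr),
\]
up to a set of Lebesgue measure zero (the endpoints and the rationals, where the
continued fraction expansion is finite, do not affect $\lambda$).

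Granting this identification, the corollary follows directly from
Proposition \ref{thm:local} applied to the interval $[\alpha,\beta]=[1/2,1]$:
there $\log(\beta/\alpha)=\log 2$, so
\[
\starlim_{n\to\infty}\ \frac{\log n}{\log 2}\,\lambda|_{T^{-n}([1/2,1])}=\lambda,
\]
and evaluating this weak limit on the whole space $[0,1]$ (using that the total
masses converge, which is part of what $\starlim$ to a probability measure gives
here) yields $\lambda\bigl(T^{-n}([1/2,1])\bigr)\cdot\log n/\log 2\to 1$, that is
$\lambda\bigl(T^{-n}([1/2,1])\bigr)\sim\log 2/\log n=1/\log_{2}n$. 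Combining with
the set identity above gives the claimed asymptotic.

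I expect the main obstacle to be the first step: carefully justifying the identity
$T^{-n}((1/2,1])=\{x:\sum_{i=1}^{k}x_{i}=n\text{ for some }k\}$ up to a null set.
One must track how the digits of the regular continued fraction expansion of $x$
evolve under $T$ — the point is that $T$ "peels off" the continued fraction expansion
one unit at a time rather than one digit at a time, so that the first return time
to $(1/2,1]$ under $T$ equals the first continued fraction digit $x_{1}$, the second
return time adds $x_{2}$, and so on — and then verify that the countable set of
exceptional points (rationals, dyadic-type preimages of $1/2$) is Lebesgue-null.
A secondary, more technical point is to make sure one may legitimately evaluate the
weak-$*$ limit in Proposition \ref{thm:local} on the non-open, non-closed test set
$[0,1]$ to extract convergence of the total masses; this is fine because the limit
measure $\lambda$ gives zero mass to the boundary, but it should be stated. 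Once
these two points are in place, the rest is immediate.
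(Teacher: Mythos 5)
Your argument is essentially the paper's own: the corollary is proved in Remark \ref{sumlevel} by applying Proposition \ref{thm:local} to $[\alpha,\beta]=[1/2,1]$ and invoking the identity $T^{-(n-1)}\left([1/2,1]\right)=\left\{[x_{1},x_{2},\ldots]:\sum_{i=1}^{k}x_{i}=n,\,k\in\N\right\}$ (quoted there from \cite[Lemma 2.1]{KessStrat09}), exactly as you propose, and your second worry is vacuous since testing the weak-$*$ limit against the continuous constant function $1$ on the compact space $[0,1]$ already gives convergence of the total masses. The only flaw is an off-by-one in your identification: $T^{j}(x)$ lies in $(1/2,1]$ precisely when $j+1$ (not $j$) is a partial sum of the continued fraction digits, so the level-$n$ sum-level set is $T^{-(n-1)}\left([1/2,1]\right)$ rather than $T^{-n}\left((1/2,1]\right)$ --- as stated your identity fails already for $n=1,2$, e.g. $T^{-1}\left((1/2,1]\right)=(1/3,2/3)$ is (up to a null set) the level-$2$ set --- but this slip is harmless for the asymptotic conclusion because $\log(n-1)\sim\log n$.
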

Further immediate consequences of the results in Theorem \ref{Thm:atomic2} and Theorem \ref{Thm:interval}  are given in the following
two corollaries.
\begin{cor} We have that  \[
\starlim_{n \to \infty}  \frac{\zeta(2)}{n}\sum_{{p/q \in [0,1] \atop  2\log q\leq n}}\,q^{-2}\,\delta_{p/q}=
\lambda \, \, \mbox{ and } \, \,  \starlim_{n \to \infty}  \frac{\log (n^2)}{n}\sum_{{p/q=\left[x_{1},\ldots,x_{k}\right]\atop \sum_{i=1}^{k}x_{i}\leq n}}\,q^{-2}\,\delta_{p/q}=\lambda.\]
\end{cor}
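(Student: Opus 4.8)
Both displayed limits are re-packagings of the two halves of Theorem~\ref{Thm:atomic2}, and no fresh ergodic input is needed. The plan is: for the first limit, re-index the Farey statement (\ref{Farey}) along the size of the denominator; for the second, split the summation set according to the value of the continued-fraction digit sum and apply a Ces\`{a}ro--Toeplitz summation to the Stern-Brocot statement (\ref{Stern-Brocot}), using that the digit-sum-level sets are exactly the even Stern-Brocot sequences (equivalently, the preimage sets $T^{-n}\{1\}$ underlying Theorem~\ref{Thm:interval}).

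For the first limit: since all fractions are reduced, the condition $2\log q\leq n$ (equivalently $q\leq\e^{n/2}$) singles out exactly the Farey set $\mathcal{F}_{m(n)}$ with $m(n):=\lfloor\e^{n/2}\rfloor$, together with the single point $0/1$; as the latter carries the fixed finite mass $\delta_{0}$, multiplying by $\zeta(2)/n\to0$ kills it. So the measure in question equals, up to a null contribution, $\sum_{p/q\in\mathcal{F}_{m(n)}}q^{-2}\delta_{p/q}$, and one invokes (\ref{Farey}) along the sequence $m=m(n)\to\infty$; a $\starlim$ over $\N$ survives restriction to any subsequence tending to infinity. It then only remains to match the normalising factors, which reduces to the elementary computation $\log m(n)\sim n/2$ together with the Landau--Mikol\'as asymptotic $\sum_{q\leq m}\varphi(q)\,q^{-2}\sim\zeta(2)^{-1}\log m$ already invoked in the proof of (\ref{Farey}) (this fixes the constant in front of the sum); recall also that rescaling a weakly convergent sequence of measures by an asymptotically equivalent scalar sequence leaves its weak-$\ast$ limit unchanged.

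For the second limit: using the dictionary between $T$ and continued fractions ($T$ strips the head $1/x_{1}$ of the expansion one unit at a time) one identifies, for each $m\in\N$, the set $\{\,p/q=[x_{1},\ldots,x_{k}]:\sum_{i=1}^{k}x_{i}=m\,\}$ with $\mathcal{S}_{m-1}=T^{-(m-1)}\{1\}$. Hence $\{\,p/q=[x_{1},\ldots,x_{k}]:\sum_{i}x_{i}\leq n\,\}$ is the \emph{disjoint} union $\bigsqcup_{j=0}^{n-1}\mathcal{S}_{j}$ (disjoint because $T(1)=0\neq1$), with $\mathcal{S}_{0}:=\{1\}$. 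Writing $\mu_{j}:=\sum_{p/q\in\mathcal{S}_{j}}q^{-2}\delta_{p/q}$, statement (\ref{Stern-Brocot}) reads $\starlim_{j\to\infty}\log(j^{2})\,\mu_{j}=\lambda$. Applying Toeplitz's Lemma --- separately to each integral $\int f\,\d\mu_{j}$, $f\in C[0,1]$, which reduces the matter to the scalar lemma --- with the weights $w_{j}:=(\log(j^{2}))^{-1}$ ($j\geq2$), admissible since $\sum_{j}w_{j}=\infty$, and using $w_{j}\cdot\log(j^{2})\mu_{j}=\mu_{j}$, one obtains the weak-$\ast$ convergence
\[
\Bigl(\sum_{j=2}^{n-1}w_{j}\Bigr)^{-1}\sum_{j=0}^{n-1}\mu_{j}\;\longrightarrow\;\lambda\qquad(n\to\infty),
\]
the two discarded terms $\mu_{0}=\delta_{1}$ and $\mu_{1}=\tfrac14\delta_{1/2}$ (where $\log(j^{2})$ is not positive) being a fixed finite measure, hence negligible after renormalisation. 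Finally $\sum_{j=2}^{n}(\log(j^{2}))^{-1}=\tfrac12\sum_{j=2}^{n}(\log j)^{-1}\sim\tfrac12\int_{2}^{n}\d t/\log t\sim n/(2\log n)=n/\log(n^{2})$, so the reciprocal of the normalising sum is asymptotically $\log(n^{2})/n$; replacing one by the other (permissible for weak-$\ast$ limits) gives the stated identity.

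As the word ``immediate'' in the statement suggests, there is no genuine obstacle: the argument is entirely soft. The points requiring care are the two set identifications (of $\{2\log q\leq n\}$ with a Farey set, and of $\{\sum_{i}x_{i}\leq n\}$ with a disjoint union of even Stern-Brocot sequences $=T^{-j}\{1\}$), the routine asymptotics $\log\lfloor\e^{n/2}\rfloor\sim n/2$ and $\sum_{j\leq n}(\log j)^{-1}\sim n/\log n$, and the remark that a weak-$\ast$ limit is unaffected both by rescaling with an asymptotically equivalent scalar sequence and by Ces\`{a}ro--Toeplitz averaging (the latter because weak-$\ast$ convergence is checked one continuous function at a time). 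If anything is a potential trap, it is merely keeping the numerical constants consistent when translating between the ``maximal denominator'' parametrisation of the Farey sequence and the ``tree level / iteration count'' parametrisation of the Stern-Brocot sequence.
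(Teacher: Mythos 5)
Your treatment of the second limit is correct and is essentially the intended ``immediate'' derivation: the identification of the digit-sum-level sets $\{p/q=[x_1,\ldots,x_k]:\sum_i x_i=m\}$ with $\mathcal{S}_{m-1}=T^{-(m-1)}\{1\}$, the functionwise Toeplitz averaging of (\ref{Stern-Brocot}) with weights $1/\log(j^2)$, the asymptotic $\sum_{j\le n}1/\log(j^{2})\sim n/\log(n^{2})$, and the disposal of the two exceptional terms $j=0,1$ all check out, and the constants are consistent there.

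The first limit is where your argument does not close, and it fails exactly at the step you defer (``it then only remains to match the normalising factors''). Carry out your own bookkeeping: the cutoff $2\log q\le n$ gives $m(n)=\lfloor \e^{n/2}\rfloor$ and $\log m(n)\sim n/2$, so (\ref{Farey}) taken along $m(n)$ yields $\starlim_{n\to\infty}\frac{2\zeta(2)}{n}\sum_{q\le \e^{n/2}}q^{-2}\delta_{p/q}=\lambda$, which is not the printed display with the factor $\zeta(2)/n$. Equivalently, by the Landau--Mikol\'as asymptotic you yourself quote, the total masses satisfy $\frac{\zeta(2)}{n}\sum_{q\le\e^{n/2}}\varphi(q)\,q^{-2}\to\frac12$, so under the printed normalisation the measures converge weakly to $\tfrac12\lambda$; a sequence of measures whose total masses tend to $\tfrac12$ cannot have weak-$\ast$ limit $\lambda$. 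Hence either you must prove the corrected form of the first display (normalisation $2\zeta(2)/n$ with cutoff $2\log q\le n$, or equivalently $\zeta(2)/n$ with cutoff $\log q\le n$) and explicitly flag the factor-two discrepancy with the statement as it stands, or your claim that ``the normalising factors match'' is simply false. Since you name this constant-matching as the one potential trap and then do not perform it, the first half of your proposal has a genuine gap: done honestly, your computation contradicts the display you set out to prove rather than establishing it.
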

The latter dichotomy can also be expressed in more down-to-earth terms as a dichotomy between
{\em  partial geometric Poincaré sums} and  {\em partial algebraic Poincaré
sums}  for the modular group $\Gamma:=PSL_{2}\left(\Z\right)$. For  results of this type on the algebraic growth
rates of Poincaré series for more general Kleinian groups we refer to \cite{KessStrat09b}.
In the following, $d$ refers to the hyperbolic metric in the upper plane model of hyperbolic space  and  $\left| \, \cdot \, \right|$ denotes the word length  in  $\Gamma$ with
respect to the two generators $z\mapsto z+1$ and $z\mapsto-1/z$ of the modular group $\Gamma$.
Also, we write $a_{n}\asymp b_{n}$ if $a_{n}/b_{n}$ is uniformly
bounded away from zero and infinity.
\begin{cor} We have that  \[
\sum_{{\gamma \in \Gamma \atop d\left(0,\gamma\left(0\right)\right)\leq n}}e^{-d\left(0,\gamma\left(0\right)\right)}\asymp n \quad\mbox{and }\sum_{{\gamma \in \Gamma \atop \left|\gamma\right|\leq n}}e^{-d\left(0,\gamma\left(0\right)\right)}\asymp\frac{n}{\log n}.\]
\end{cor}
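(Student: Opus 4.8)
The plan is to translate both partial Poincar\'e sums into partial weighted sums over the Farey fractions and then to read off the two asymptotics from the Corollary immediately preceding this one, applied to the constant test function (equivalently, in terms of total masses). The bridge is the standard dictionary between the modular group $\Gamma=PSL_{2}(\Z)$ and the continued fraction algorithm. Fixing the base point $0$ of the hyperbolic plane used in the statement and organising $\Gamma$ according to the cusp $\gamma(\infty)\in\mathbb{Q}\cup\{\infty\}$, the elements $\gamma$ with $\gamma(\infty)=p/q$ form, for each reduced fraction $p/q$, a single right coset of the parabolic stabiliser $\Gamma_{\infty}$ of $\infty$, and for the representative $\gamma_{p/q}$ furnished by the continued fraction algorithm one has
\begin{equation*}
 e^{-d(0,\gamma_{p/q}(0))}\asymp q^{-2}\qquad\text{and}\qquad |\gamma_{p/q}|\asymp\sum_{i=1}^{k}x_{i}\,,
\end{equation*}
where $p/q=[x_{1},\ldots,x_{k}]$. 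The first relation comes from the classical identity $\cosh d(0,\gamma(0))\asymp\|\gamma\|^{2}$ for the matrix norm together with the fact that the continuant matrix of $[x_{1},\ldots,x_{k}]$ has norm $\asymp q$; the second is the well-known comparison of the word length in $\Gamma$ with respect to the generators $z\mapsto z+1$ and $z\mapsto-1/z$ with the sum of continued fraction digits. Moreover, summing $e^{-d(0,\gamma(0))}$ over the whole coset of $\gamma_{p/q}$ changes the first relation only by a bounded factor, and the dominant terms of that coset sum are among the elements of $\asymp$-minimal norm, hence of word length $\asymp\sum_{i}x_{i}$.

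It follows that the contribution of the cusp $p/q$ to either Poincar\'e series is $\asymp q^{-2}$, and that this remains true after imposing the constraint $d(0,\gamma(0))\le n$ as long as $2\log q\le n+O(1)$, respectively the constraint $|\gamma|\le n$ as long as $\sum_{i}x_{i}\le n+O(1)$ (for larger $q$ the contribution is of lower order and may be discarded). Carrying out the remaining sum over the fractions $p/q$, and using the $\Gamma$-symmetry which reduces the cusps with $p>q$ or $p<0$ to those in $(0,1]$ at the cost of a bounded factor, we obtain
\[
 \sum_{\substack{\gamma\in\Gamma\\ d(0,\gamma(0))\le n}}\!\!e^{-d(0,\gamma(0))}\ \asymp\!\!\sum_{\substack{p/q\in(0,1]\\ 2\log q\le n}}\!\!q^{-2}\qquad\text{and}\qquad\sum_{\substack{\gamma\in\Gamma\\ |\gamma|\le n}}\!\!e^{-d(0,\gamma(0))}\ \asymp\!\!\!\!\sum_{\substack{p/q=[x_{1},\ldots,x_{k}]\\ \sum_{i}x_{i}\le n}}\!\!\!\!q^{-2}.
\]
Now the preceding Corollary finishes the argument: testing its first weak limit against the constant function $\mathbf 1$ on the compact interval $[0,1]$ gives $\sum_{p/q\in(0,1],\,2\log q\le n}q^{-2}\sim n/\zeta(2)$, hence $\asymp n$, and testing the second gives $\sum_{p/q=[x_{1},\ldots,x_{k}],\,\sum_{i}x_{i}\le n}q^{-2}\sim n/\log(n^{2})$, hence $\asymp n/\log n$. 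Combining these with the two displayed comparisons yields the asserted relations. (The first of them may alternatively be obtained from the classical lattice point count $\#\{\gamma\in\Gamma:d(0,\gamma(0))\le n\}\asymp e^{n}$ by partial summation; passing through the Corollary is what exhibits it as the geometric half of the announced dichotomy.)

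The points left to verify are routine: enlarging the two constraints by an additive constant does not alter the orders of the sums (they grow linearly, respectively like $n/\log n$), and the $O(1)$ ambiguities coming from the parabolic stabilisers, the choice of base point and the subdominant coset terms are absorbed uniformly into the $\asymp$-constants. The one genuinely substantial ingredient is the dictionary above, and in particular the comparison $|\gamma|\asymp\sum_{i}x_{i}$: one has to match reduced words in $z\mapsto z+1$ and $z\mapsto-1/z$ with the continued fraction algorithm while respecting the relations $S^{2}=\mathrm{id}$ and $(ST)^{3}=\mathrm{id}$ in $\Gamma$, and check that minimising the word length over a parabolic coset reproduces the digit sum up to a bounded error. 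This is the step I expect to cost the most effort; granting it, the rest is bookkeeping together with the appeal to the preceding Corollary.
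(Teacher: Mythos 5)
Your proposal is correct and follows essentially the route the paper intends: the corollary is stated there as the ``down-to-earth'' reformulation of the preceding corollary via exactly the dictionary you spell out (cusp-coset decomposition of $\Gamma$, $e^{-d(0,\gamma(0))}\asymp q^{-2}$ for the coset of the cusp $p/q$, and word length comparable to the continued fraction digit sum), the paper itself leaving this translation implicit with a pointer to \cite{KessStrat09b}. The only substantive input you do not prove, the comparison $|\gamma|\asymp\sum_i x_i$, is indeed a known fact (undistortedness of $\langle z\mapsto z+1\rangle$ in the virtually free group $\Gamma$ together with the explicit word $T^{x_1}ST^{x_2}S\cdots$), so flagging it as granted is acceptable.
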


\begin{rem} ($i$)
Note that the results in Theorem \ref{Thm:atomic2}  complement 
well-known results on  weak convergence 
of empirical measures with constant weight $1$  for the sequences $\left(\mathcal{F}_{n}\right)$ and $\left(\mathcal{S}_{n}\right)$.
More precisely, in \cite{Mikolas} (see also \cite{CodecaPerelli,Dufner,Kopriva1,Kopriva2,Landau})
 it was shown that $\left(\mathcal{F}_{n}\right)$ is uniformly distributed, that is,
\begin{equation}
\label{fac:Farey}
\starlim_{n \to \infty}\, \,   \frac{1}{\card\left(\mathcal{F}_{n}\right)}\sum_{p/q\in\mathcal{F}_{n}}\delta_{p/q}=\lambda.\end{equation}
On the other hand, it is known that the Stern-Brocot sequence is not uniformly distributed.
In fact,  an immediate consequence of the results
in \cite{KesseboehmerStratmann08} is that   \[
\starlim_{n \to \infty}\, \,   \frac{1}{\card\left(\mathcal{S}_{n}\right)}\sum_{p/q\in\mathcal{S}_{n}}\,\delta_{p/q}=m_{T},
\]
where  $m_{T}$ refers to the {\em measure of maximal entropy} for the Farey map $T$. Here,
the reader might like to recall that the distribution function of
$m_{T}$ is equal to the {\em Minkowski question mark function} (see e.g.
\cite{KesseboehmerStratmann08}) and hence, the two measures $m_{T}$ and $\lambda$
are mutually singular. In fact, a numerical calculation has shown that the Hausdorff dimension $\dim_{H}(m_T):=\inf\left\{ \dim_{H}(X):m_T(X)=1\right\}$ of the measure $m_T$
is approximable  equal to  $0.875$ (see e.g.  \cite{KesseboehmerStratmann08,L,TU}).

($ii$) In order to tie the results in Theorem \ref{Thm:atomic2} (1) and Theorem \ref{Thm:interval} to  elementary number theory
and,  in particular, to give a clarification of the factor $vw$ in Theorem \ref{Thm:interval}, we mention the following observation for the 
even Stern-Brocot tree. For each reduced fraction $v/w \in (0,1)$ and for all $n \in \N_0$, we have
\begin{equation}
 \label{fac:Stern2}  \sum_{p/q \in T^{-n}(v/w)} \frac{1}{pq} = \frac{1}{vw}.
 \end{equation}
To see this first in an elementary way, note that we have  
$ p/q \in \mathcal{S}_n $ if and only if $ T^{-1}(p/q) = \{p/(p+q), q/(p+q)\} \subset \mathcal{S}_{n+1}$. Furthermore, 
with $\kappa:  \bigcup_{n \in \N} \mathcal{S}_n \to \R$ given by  $\kappa( p/q):= 1/(pq)$, one immediately verifies that
\[ \kappa( p/(p+q)) + \kappa(q/(p+q)) = \kappa(p/q).\]
The proof now follows by induction. 
Note that for the special case $v/w=1/2$ one immediately verifies that  $\mathcal{S}_n=T^{-(n-1)} (1/2)$,
and then  (\ref{fac:Stern2}) becomes
\[
 \sum_{p/q \in \mathcal{S}_n} \frac{2}{pq} = 1, \mbox{ for all $n \in \N$},
\]
which has also been observed by
the  Canadian music theorist 
 Pierre Lamothe (see the reference by Bogomolny in \cite{inter}).
\\
Alternatively, the equality in (\ref{fac:Stern2}) can also be deduced immediately  from the well-known
fixed point equation for the Perron-Frobenius operator $\mathcal{L}$ associated with  the  Farey map $T$ (see Section \ref{sec:IE} for the definition).
For this let $h$ denote the eigenfunction of $\mathcal{L}$ associated with the eigenvalue $1$. It is well known that 
 $h$  is  given by $h(x):=1/x$, which consequently gives that  
 \[ \sum_{y \in T^{-n} (x)} |(T^{n})'(y)|^{-1}  h(y) = h(x), \mbox{for all $x \in (0,1)$ and $n \in \N_0$}. \]
 Since  $|(T^{n})'(p/q)|= q^2/w^2$ for all $p/q  \in T^{-n} (v/w)$, the statement in (\ref{fac:Stern2}) follows. \\
 Finally, let us  apply  Theorem \ref{Thm:interval} 
to obtain yet another proof of the statement in (\ref{fac:Stern2}), and  this proof  will then implicitly use dual aspects of the 
Perron-Frobenius operator. More precisely, by applying Theorem \ref{Thm:interval} twice, we obtain the following, which immediately implies (\ref{fac:Stern2}). For
 each $n \in \N_0$ and for every reduced fraction $v/w \in (0,1)$, we have  
  \begin{eqnarray*}  \sum_{p/q \in T^{-n}(v/w)}  \frac{1}{pq} \cdot \lambda & = &
\sum_{p/q \in T^{-n}(v/w)} \starlim_{k \to \infty}  \,   \log k  \sum_{r/s \in T^{-k} (p/q)} \,
q^{-2}\,\delta_{p/q}\\
\hspace{-4mm} & = & \frac{1}{vw} \starlim_{k \to \infty}   \,   \log (k^{vw}) \sum_{p/q\in T^{-(n+k)}(v/w) }\,
q^{-2}\,\delta_{p/q} = 
\frac{1}{vw} \cdot \lambda.
\end{eqnarray*}
\end{rem}

\section{Proofs of Theorem \ref{Thm:atomic2}, \ref{Thm:interval} and Proposition \ref{thm:local}}

\subsection{Proof of Proposition \ref{thm:local}}\label{sec:IE}
As already mentioned in the introduction, the proof of the Proposition
\ref{thm:local} will make use of some results from infinite
ergodic theory.  
Therefore, let us first recall a few basic facts and results from infinite
ergodic theory for the Farey map. (For an overview, further definitions
and details concerning infinite ergodic theory in general, the reader
is referred to \cite{Aaronson:97}.) It is well known that the 
 {\em Farey system} $\left([0,1],T,\mathcal{A},\mu\right)$
is a conservative ergodic measure preserving dynamical systems. Here,
$\mathcal{A}$ refers to the Borel $\sigma$-algebra of $[0,1]$
and the measure $\mu$ is the infinite $\sigma$-finite $T$-invariant
measure absolutely continuous with respect to the Lebesgue measure
$\lambda$. (Recall that {\em conservative and ergodic} means that for
all $f\in L_{1}^{+}\left(\mu\right):=\left\{ f\in L_{1}\left(\mu\right):\; f\geq0\;\mathrm{and}\;\mu(f\cdot\1_{[0,1]})>0\right\} $
we have $\mu$--almost everywhere $\sum_{n\geq0}\hat{T}^{n}\left(f\right)=\infty$,
where $\1_{[0,1]}$ refers to the characteristic function
of $[0,1]$; also, {\em invariance of $\mu$ under $T$} means
$\hat{T}\left(\1_{[0,1]}\right)=\1_{[0,1]}$,
where $\hat{T}$ denotes the transfer operator defined below.) In
fact, with $\phi_{0}:[0,1]\to[0,1]$ defined by
$\phi_{0}(x):=x$, the measure $\mu$ is explicitly given by \[
\mathrm{d}\lambda=\phi_{0}\,\mathrm{d}\mu.\]
 Moreover, recall that the {\em transfer operator} $\hat{T}:L_{1}\left(\mu\right)\to L_{1}\left(\mu\right)$
associated with the Farey system is the positive
linear operator which is given by \[
\mu\left(\1_{C}\cdot\hat{T}\left(f\right)\right)=\mu\left(\1_{T^{-1}\left(C\right)}\cdot f\right),\mbox{ for all }f\in L_{1}\left(\mu\right),C\in\mathcal{A}.\]
 Finally, note that the {\em Perron-Frobenius operator} $\mathcal{L}:L_{1}\left(\mu\right)\to L_{1}\left(\mu\right)$
of the Farey system is given by \[
\mathcal{L}\left(f\right)=\left|u_{0}'\right|\cdot(f\circ u_{0})+\left|u_{1}'\right|\cdot(f\circ u_{1}),\mbox{ for all }f\in L_{1}\left(\mu\right),\]
where $u_0$ and $u_1$ refer to the inverse branches of $T$, which are given for $x \in [0,1]$  by
\[ u_{0}\left(x\right)=x/(1+x)\mbox{ and }u_{1}\left(x\right)=
 1/(1+x).\]
 One then immediately verifies that the two operators $\hat{T}$ and
$\mathcal{L}$ are related through \[
\hat{T}\left(f\right)=\phi_{0}\cdot\mathcal{L}\left(f/\phi_{0}\right),\mbox{ for all }f\in L_{1}\left(\mu\right).\]
 Now, the crucial notion for proving Proposition \ref{thm:local} is provided by the following concept of a uniformly
returning set which was introduced in \cite{KesseboehmerSlassi:05}. \begin{itemize}
\item [] 
 \textit{A set $C\in\mathcal{A}$
with $0<\mu\left(C\right)<\infty$ is called {\em uniformly returning  for}
$f\in L_{\mu}^{+}$ if there exists a positive increasing sequence
$\left(w_{n}\right)_{n \in \N}$ of positive reals such
that $\mu$--almost everywhere and uniformly in $C$ we have \[
\lim_{n\to\infty}w_{n}\hat{T}^{n}\left(f\right)=\mu(f).\]
}  
\end{itemize}
In \cite{KesseboehmerSlassi:05}[Lemma 3.3] it was shown that for the Farey system we have that every interval contained 
in $[1/2,1]$ is uniformly returning, for each function $f$ which has the property that
\[ \hat{T}^{n}\left(f\right) \in \mathcal{D}:= \left\{ g\in C^{2}\left(\left[0,1\right]\right):g'\geq0,g''\leq0\right\} .\]
Moreover, in \cite[Section 3.1]{KesseboehmerSlassi:05} it was shown  that in the situation of the Farey system the sequence $\left(w_{n}\right)_{n \in \N}$ can
 be chosen to be equal to $\left(\log n\right)_{n \in \N}$. (For further examples of one dimensional dynamical
systems which allow uniformly returning sets for some appropriate
functions we refer to \cite{Thaler:00}.)  We are now in the position to give the proof of Proposition
\ref{thm:local}.
 \begin{proof}
[Proof of Proposition \ref{thm:local} ] Consider the function  $\phi_{t}$ given by
$\phi_{t}:x\mapsto x\cdot\exp\left(tx\right)$. The first aim is to show that for all $t\in\left[-1,1\right]$ we have \[ \widehat{T}\phi_{t}
\in \mathcal{D} .
\]
Indeed, for  $t \in [-1, 0]$ this is an immediate consequence of the facts that 
$\phi_{t}$ is increasing and concave and that $\widehat{T}\left(\mathcal{D}\right)\subset\mathcal{D}$.
For $t\in(0,1]$,
a straight forward computation shows that the first
derivative at $x \in [0,1]$ is given by \[
\left(\widehat{T}\phi_{t}\right)'(x)=\frac{\phi_{t}'\left(\frac{x}{x+1}\right)-x\phi_{t}'\left(\frac{1}{x+1}\right)}{\left(x+1\right)^{3}}+\frac{\phi_{t}\left(\frac{1}{x+1}\right)-\phi_{t}\left(\frac{x}{x+1}\right)}{\left(x+1\right)^{2}}.\]
 For the second derivative we then obtain \begin{eqnarray*}
\left(\widehat{T}\phi_{t}\right)''\left(x\right) & = & \frac{\left(-2\, xt-6x+2t+xt^{2}+2x^{3}-4tx^{2}-4\right)\exp\left(\frac{tx}{x+1}\right)}{\left(x+1\right)^{6}}\\
 &  & \:+\frac{\left(2tx-6x-2t+xt^{2}+2x^{3}+4tx^{2}-4\right)\,\exp\left(\frac{t}{x+1}\right)}{\left(x+1\right)^{6}}.\end{eqnarray*}
 This immediately implies that $\left(\widehat{T}\phi_{t}\right)''\leq0$,
for all $t\in(0,1]$. Therefore, $\left(\widehat{T}\phi_{t}\right)'$
is decreasing on $[0,1]$ with $\left(\widehat{T}\phi_{t}\right)'\left(1\right)=0$,
which shows that on $[0,1]$ we have that $\left(\widehat{T}\phi_{t}\right)'\geq 0$.
Hence, we can apply  \cite[Lemma 3.2]{KesseboehmerSlassi:08}, which then implies that
$\widehat{T}\phi_{t}\in\mathcal{D}$,
for all $t\in\left[-1,1\right]$.\\
We proceed by noting that \cite[Lemma 3.3]{KesseboehmerSlassi:08} guarantees  that every interval contained in 
 $[1/2,1]$  is a uniformly returning
 set for $\phi_t$, for each $t \in [-1,1]$. 
In order to complete the proof of the proposition, we employ the method of moments as follows. 
 The aim is to show that for each $[\alpha, \beta] \subset (0,1]$ and for  each $t\in[-1,1]$ we have for the moment generating
function at $t$  that  \[
\lim_{n\to\infty}\int\exp\left(tx\right)\cdot\frac{\log n}{\mu\left([\alpha, \beta]\right)}\cdot\1_{T^{-n}\left([\alpha, \beta]\right)}\d\lambda(x)=\int\exp\left(tx\right)\d\lambda(x).\]
To see this, we argue by induction as follows. For $[\alpha, \beta] \subset [1/2,1]$, we have that
\begin{eqnarray*}
 &  & \hspace{-1cm}\lim_{n\to\infty}\int\exp\left(tx\right)\cdot\frac{\log n }{\mu\left([\alpha, \beta]\right)}\cdot\1_{T^{-n}\left([\alpha, \beta]\right)}\left(x\right)\d\lambda(x)\\
 & = & \lim_{n\to\infty} \frac{\log n}{\mu\left([\alpha, \beta]\right)}\cdot \mu\left(\phi_{t}\cdot \1_{T^{-n}\left([\alpha, \beta]\right)}\right)=\lim_{n\to\infty}\frac{\log n}{\mu\left([\alpha, \beta]\right)}\cdot\mu\left(\widehat{T}^{n}\phi_{t}\cdot\1_{[\alpha, \beta]}\right)\\ &=&\mu\left(\phi_{t}\right)
  =  \int\exp\left(tx\right)\,\d\lambda(x).\end{eqnarray*}
Next, suppose that the assertion holds for any interval which is contained in the set $\mathcal{E}_n:=\bigcup_{k=0}^{n-1} T^{-k} ([1/2,1])$,
and consider an interval  $[\alpha, \beta] \subset T^{-n} ([1/2,1]) \setminus \mathcal{E}_n$. Since $T([\alpha,\beta]) \subset
 \mathcal{E}_n$, we then have
\begin{eqnarray*}
 &  & \hspace{-1cm}\lim_{m\to\infty}\int\exp\left(tx\right)\cdot\frac{\log m }{\mu\left([\alpha, \beta]\right)}\cdot\1_{T^{-m}\left([\alpha, \beta]\right)}\left(x\right)\d\lambda(x)\\
 & = & \lim_{m\to\infty}\frac{\log m }{\mu\left([\alpha, \beta]\right)}\cdot\mu\left(\widehat{T}^{m}\phi_{t}\cdot \left(\1_{
 T^{-1}(T([\alpha, \beta]))} - \1_{
 T^{-1}(T([\alpha, \beta]))\cap \mathcal{E}_n} \right)\right)\\ &=&
\lim_{m\to\infty}\frac{\log m }{\mu\left([\alpha, \beta]\right)}\cdot \left(\mu\left(\widehat{T}^{m+1}\phi_{t}\cdot \1_{
 T([\alpha, \beta])} \right) - \mu\left(\widehat{T}^{m}\phi_{t}\cdot \1_{
 T^{-1}(T([\alpha, \beta]))\cap \mathcal{E}_n} \right) \right)  \\ &=& 
  \frac{ \mu\left(\phi_{t}\right)}{\mu\left([\alpha, \beta]\right)} \left( \mu(T([\alpha, \beta]))- \mu(T^{-1}(T([\alpha, \beta]))\cap \mathcal{E}_n) \right)
  =  \int\exp\left(tx\right)\,\d\lambda(x).\end{eqnarray*}
 This finishes the proof of Proposition \ref{thm:local}. 
\end{proof}

\begin{rem}
\label{sumlevel}
We  remark that the assertion in Corollary \ref{Thm:interval} is an immediate consequence of Proposition \ref{thm:local}.  
Indeed, by choosing $[\alpha,\beta] = [1/2,1]$ and observing that  (see \cite[Lemma 2.1]{KessStrat09})
\[      T^{-(n-1)} \left([1/2,1]\right) =  \left\{[x_{1},x_{2},\ldots]:\sum_{i=1}^{k}x_{i}=n,k\in\N\right\},\]
it follows that
\[
\lambda\left(\left\{[x_{1},x_{2},\ldots]:\sum_{i=1}^{k}x_{i}=n,k\in\N\right\}\right)\sim\frac{\log 2}{\log n}.\]
\end{rem}

\subsection{Proof of Theorem  \ref{Thm:interval}}

The following two lemmata will be required in the proof of  Theorem  \ref{Thm:interval}. 
Note that the first lemma of these has already been obtained in  \cite{KessStrat09}. However, in order
to keep the paper as self contained as possible, we include a proof here.
\begin{lem}
\textup{\label{cor:GrowthRateBound} \[
  \sum_{p/q\in\mathcal{S}_{n}}q^{-2}\asymp \frac{1}{\log n}.\]
}\end{lem}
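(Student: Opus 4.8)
The plan is to reduce the claim to the asymptotics of a single iterate of the Perron--Frobenius operator evaluated at the point $1/2$, and then to feed in the uniformly returning property of $[1/2,1]$ that was already extracted in the proof of Proposition \ref{thm:local}. The point is that the left-hand side is (up to a constant) exactly $\mathcal{L}^{n-1}$ applied to the constant function, read off at $1/2$.

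First I would rewrite the sum. Recall from the Remark following Theorem \ref{Thm:interval} that $\mathcal{S}_n=T^{-(n-1)}(1/2)$ and that $|(T^{n-1})'(p/q)|=q^{2}/4$ for every $p/q\in T^{-(n-1)}(1/2)$ (the case $v/w=1/2$). Iterating the defining formula $\mathcal{L}(f)=|u_0'|\cdot(f\circ u_0)+|u_1'|\cdot(f\circ u_1)$ gives the standard identity $\mathcal{L}^{m}(\1_{[0,1]})(x)=\sum_{y\in T^{-m}(x)}|(T^{m})'(y)|^{-1}$, and therefore
\[ \sum_{p/q\in\mathcal{S}_n}q^{-2}=\frac14\sum_{y\in T^{-(n-1)}(1/2)}|(T^{n-1})'(y)|^{-1}=\frac14\,\mathcal{L}^{n-1}(\1_{[0,1]})(1/2). \]
Next I would pass from $\mathcal{L}$ to the transfer operator $\widehat{T}$ using the relation $\widehat{T}(f)=\phi_0\cdot\mathcal{L}(f/\phi_0)$, which by induction yields $\widehat{T}^{m}(\phi_0)=\phi_0\cdot\mathcal{L}^{m}(\1_{[0,1]})$, hence $\mathcal{L}^{m}(\1_{[0,1]})(1/2)=2\,\widehat{T}^{m}(\phi_0)(1/2)$ and so $\sum_{p/q\in\mathcal{S}_n}q^{-2}=\tfrac12\,\widehat{T}^{n-1}(\phi_0)(1/2)$. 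Since $\phi_0\in\mathcal{D}$ and $\widehat{T}(\mathcal{D})\subset\mathcal{D}$, one has $\widehat{T}^{m}\phi_0\in\mathcal{D}$ for all $m$, so, exactly as in the proof of Proposition \ref{thm:local}, $[1/2,1]$ is a uniformly returning set for $\phi_0$ with weights $w_m=\log m$; thus $(\log m)\,\widehat{T}^{m}(\phi_0)\to\mu(\phi_0)$ uniformly on $[1/2,1]$, in particular at $x=1/2$. As $\d\mu=\phi_0^{-1}\,\d\lambda$ we get $\mu(\phi_0)=\int_0^1 1\,\d x=1$, whence $\widehat{T}^{m}(\phi_0)(1/2)\sim 1/\log m$ and
\[ \sum_{p/q\in\mathcal{S}_n}q^{-2}\sim\frac{1}{2\log(n-1)}\sim\frac{1}{2\log n}, \]
which in particular gives the asserted $\sum_{p/q\in\mathcal{S}_n}q^{-2}\asymp 1/\log n$.

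I do not expect a genuine obstacle: the lemma is essentially a corollary of the apparatus assembled for Proposition \ref{thm:local}. The only points that need a little care are (i) that the limit is invoked at the boundary point $x=1/2$ of the uniformly returning set, which is harmless because the $\widehat{T}^{m}(\phi_0)$ are continuous and the convergence is uniform on the \emph{closed} interval $[1/2,1]$ (alternatively one runs the argument on $[1/2,1/2+\delta]$ and lets $\delta\downarrow 0$), and (ii) keeping track of the index shift between level $n$ of the even Stern--Brocot sequence and the $(n-1)$-st preimage under $T$. A purely geometric variant is also available and avoids the operator bookkeeping: writing $\mathcal{S}_n=\{\psi(1/2)\}$ over the $2^{n-1}$ inverse branches $\psi$ of $T^{n-1}$, the intervals $\psi([1/2,3/4])$ are pairwise disjoint with union $T^{-(n-1)}([1/2,3/4])$, and the branches have distortion on $[1/2,3/4]$ bounded independently of $n$ and $\psi$ (being Möbius, their poles stay at distance $\ge 1/4$ from $[1/2,3/4]$), so $|\psi([1/2,3/4])|\asymp|\psi'(1/2)|=4q^{-2}$ with $q$ the denominator of $\psi(1/2)$; summing over $\psi$ and applying Proposition \ref{thm:local} with $[\alpha,\beta]=[1/2,3/4]$ again yields $\sum_{p/q\in\mathcal{S}_n}q^{-2}\asymp 1/\log n$.
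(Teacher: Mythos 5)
Your argument is correct, but it takes a genuinely different route from the paper. The paper proves the lemma geometrically: it sets up a $1$--$1$ correspondence between $\mathcal{S}_{n}$ and the connected components $C_{n}(p/q)$ of the sum-level set $T^{-(n-1)}([1/2,1])$, shows by standard Diophantine estimates that $\lambda\left(C_{n}(p/q)\right)\asymp q^{-2}$, and then sums and invokes Corollary \ref{cor1}. Your primary route instead recognises $\sum_{p/q\in\mathcal{S}_{n}}q^{-2}=\tfrac12\,\widehat{T}^{\,n-1}(\phi_{0})(1/2)$ and feeds in the uniformly returning property of $[1/2,1]$ for $\phi_{0}$ (the case $t=0$ of the family $\phi_{t}$ already treated in the proof of Proposition \ref{thm:local}) with weights $\log n$; this is a clean operator-theoretic argument which even yields the sharper asymptotic $\sum_{p/q\in\mathcal{S}_{n}}q^{-2}\sim\frac{1}{2\log n}$, consistent with what Theorem \ref{Thm:atomic2} gives when tested against $f\equiv 1$, and it avoids any circularity since it uses neither Theorem \ref{Thm:interval} nor Corollary \ref{cor1}. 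Your alternative ``geometric variant'' (comparing $q^{-2}$ with the lengths of the branch images $\psi([1/2,3/4])$ via bounded M\"obius distortion and then applying Proposition \ref{thm:local}) is essentially the paper's proof with the distortion estimate replacing the continued-fraction cylinder estimates. The only point needing the care you already flag is the evaluation at the single point $x=1/2$: the uniformly returning definition gives convergence $\mu$-almost everywhere and uniformly on the interval, so you must pass to the exceptional point either by your density-plus-continuity argument (each $\widehat{T}^{\,m}\phi_{0}$ is continuous and the convergence is uniform on a full-measure, hence dense, subset of $[1/2,1]$) or, if one only wants $\asymp$, by monotonicity and concavity of $\widehat{T}^{\,m}\phi_{0}\in\mathcal{D}$; as written your resolution is adequate.
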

\begin{proof}
First note that
there is a 1--1 correspondence between the 
sequence $\left(\mathcal{S}_{n}\right)$ and the set of connected components of  $T^{-(n-1)}\left([1/2,1]\right)$. That is,  if $p/q=[a_{1},\ldots,a_{n}]\in \mathcal{S}_{n}$, where 
$a_{n}>1$, then one of these connected component is given by \begin{eqnarray*}
C_{n}\left(p/q\right) & := & \{[x_{1},x_{2},\ldots]:x_{i}=a_{i}\:\mbox{for }1\leq i\leq n\}\:\\
 &  & \qquad \cup \{[x_{1},x_{2},\ldots]:x_{i}=a_{i}\:\mbox{for }1\leq i\leq n-1, x_{n}=a_{n}-1,x_{n+1}=1\}.
 \end{eqnarray*}
Using standard Diophantine estimates we find that $\lambda\left(C_{n}\left(p/q\right)\right)\asymp1/q^{2}$.
Hence, an application of Corollary \ref{cor1} finishes the proof of the lemma. 
\end{proof}
For the
next lemma note that the sequence $\left(\mathcal{S}_{n}\right)$
can also be expressed in terms of the inverse branches $u_1$ and $u_2$ of the Farey map $T$.  
Namely, one immediately verifies that the orbit of the unit interval under the free
semi-group $\Phi$ generated by $u_1$ and $u_2$ is in 1--1 correspondence
to the set of all Stern-Brocot intervals
\[ \left\{
\left[\frac{s_{n,k}}{t_{n,k}},\frac{s_{n,k+1}}{t_{n,k+1}}\right):\,
n \in \N_0; k=1,\ldots,2^{n}\right\} .\] 
 Note that for each rational number $v/w \in (0,1]$ we have that \[
\{T^{-n}\left\{ v/w\right\}: n\in \N\} =\left\{ \gamma \left(v/w \right): \gamma \in \Phi \right\} .\]
Moreover, note that the $\Phi$-orbit of $1$ is equal to the set of rational numbers in $(0,1)$.
More precisely, we have that if  $\gamma \in \Phi$ then $\gamma(1)=v/w$, for some $v,w \in \N$ such that $v<w$ and $\mbox{gcd} (v,w)=1$, and for the  modulus of the  derivative of $\gamma$ at $1$ we then have that  $|\gamma'(1)|=w^{-2}$.

In the following we let $\mathcal{U}_\epsilon (x)$ denote the interval centred at $x \in \R$ of Euclidean diameter $\diam (\mathcal{U}_\epsilon (x))$ equal to $\epsilon>0$.

\begin{lem}
\label{diam}
For each $ g \in \Phi$ there exists $\Delta: (0,1] \to \R_+$
with $\lim_{s \to 0}\Delta(s)=0$ such that for each  $ h \in \Phi$ and 
 $\epsilon >0$ sufficiently small,  we have  \[
 \left| \diam(h(\mathcal{U}_\epsilon (g(1))))-  \epsilon \, |(h'(g(1))|\right| < \epsilon \, |(hg)'(1)| \, \, \Delta(\epsilon).\]
 \end{lem}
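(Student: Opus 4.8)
The plan is to reduce everything to the elementary geometry of Möbius transformations. Every element $h \in \Phi$ is a composition of the inverse branches $u_0, u_1$, hence is a Möbius transformation of the form $h(x) = (ax+b)/(cx+d)$ with $a,b,c,d \ge 0$, $ad-bc = \pm 1$, mapping $[0,1]$ into itself. For such a map the derivative is $h'(x) = (ad-bc)/(cx+d)^2$, so $|h'|$ is positive and, crucially, \emph{monotone} on $[0,1]$ (the denominator $cx+d$ is affine, hence monotone). The first step is therefore to record the exact two-point formula: for an interval $[p,p+\epsilon] \subset [0,1]$,
\[
\diam\bigl(h([p,p+\epsilon])\bigr) = \bigl|h(p+\epsilon) - h(p)\bigr| = \frac{\epsilon}{|c p + d|\,|c(p+\epsilon)+d|}.
\]
Writing $p = g(1)$ and comparing with $\epsilon\,|h'(g(1))| = \epsilon/|cp+d|^2$, the error is
\[
\Bigl|\diam(h(\mathcal{U}_\epsilon(g(1)))) - \epsilon\,|h'(g(1))|\Bigr|
= \epsilon\,\frac{|c\epsilon|}{|cp+d|^2\,|c(p+\epsilon)+d|}
= \epsilon\,|h'(g(1))|\cdot\frac{|c|\,\epsilon}{|c(p+\epsilon)+d|}.
\]
(One must be mildly careful: $\mathcal{U}_\epsilon(g(1))$ is the interval \emph{centred} at $g(1)$ of length $\epsilon$, so one applies the above to $[g(1)-\epsilon/2, g(1)+\epsilon/2]$; this only changes absolute constants and the roles of the two endpoints, and for $\epsilon$ small enough this interval still lies in $(0,1)$ since $g(1) \in (0,1)$.)

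The second step is to bound the fraction $|c|\epsilon / |c(p+\epsilon)+d|$ in terms of the data the lemma allows, namely $\epsilon\,|(hg)'(1)|$ and a function $\Delta(\epsilon)$ depending only on $g$ (not on $h$). Here I use the chain rule: $(hg)'(1) = h'(g(1))\,g'(1) = h'(p)\,g'(1)$, so $|(hg)'(1)| = g'(1)/|cp+d|^2$. Thus
\[
\frac{|c|\,\epsilon}{|c(p+\epsilon)+d|}
= \epsilon\,|c|\cdot\frac{|cp+d|}{|c(p+\epsilon)+d|}\cdot\frac{1}{|cp+d|}.
\]
The middle ratio $|cp+d|/|c(p+\epsilon)+d|$ is bounded by a universal constant (say $\le 2$) once $\epsilon$ is small, since $cp+d \ge d \ge$ a fixed positive number depending on $g$ only... this is exactly the point where the $g$-dependence must be isolated, so let me be precise. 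We have $p = g(1) \in (0,1)$, a fixed rational, and $d = $ the $(2,2)$-entry of $h$; since $cp + d = 1/\sqrt{|h'(p)|}$ could be large, the naive bound $|c|/|cp+d| \le 1/p$ is the right one: $|c|/|cp+d| \le |c|/(cp) \le 1/p = 1/g(1)$ when $c > 0$, and the term vanishes when $c = 0$. Combining,
\[
\Bigl|\diam(h(\mathcal{U}_\epsilon(g(1)))) - \epsilon\,|h'(g(1))|\Bigr|
\le \epsilon\,|h'(g(1))|\cdot\frac{2\epsilon}{g(1)}
= \frac{2\epsilon}{g(1)\,g'(1)}\cdot \epsilon\,|(hg)'(1)|,
\]
using $|h'(g(1))| = |(hg)'(1)|/g'(1)$ in the last step. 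So one may take $\Delta(\epsilon) := C_g\,\epsilon$ with $C_g := 2/(g(1)g'(1))$, a constant depending only on $g$, which indeed satisfies $\lim_{\epsilon \to 0}\Delta(\epsilon) = 0$.

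The main obstacle — and the only place real care is needed — is making sure the constant in $\Delta$ genuinely does not depend on $h$: $h$ ranges over an infinite family whose matrix entries $c, d$ are unbounded, so one cannot afford any estimate of the form $|c| \le$ const or $|cp+d| \ge$ const with $h$-independent constants. The resolution, as above, is that the only combinations of $c,d$ that appear are \emph{scale-invariant} ratios — $|c|/|cp+d|$, bounded by $1/p = 1/g(1)$, and $|cp+d|/|c(p+\epsilon)+d|$, bounded by a universal constant for small $\epsilon$ — so all the $h$-dependence cancels and what survives is governed by $g$ and $\epsilon$ alone. A clean way to phrase this without matrix entries at all is via the bounded-distortion property of $h$ on the short interval $\mathcal{U}_\epsilon(g(1))$: since $h$ is Möbius with a pole outside $[0,1]$, $|h'(y)/h'(z)| = 1 + O(\epsilon)$ uniformly in $h$ for $y,z \in \mathcal{U}_\epsilon(g(1))$, and $\diam(h(\mathcal{U}_\epsilon(g(1)))) = \int_{\mathcal{U}_\epsilon(g(1))}|h'| = \epsilon\,|h'(g(1))|\,(1+O(\epsilon))$ with the implied constant depending only on the (fixed) distance from $g(1)$ to the nearest pole, i.e. only on $g$. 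Either route gives the claim; I would present the explicit two-point computation since it yields the linear-in-$\epsilon$ bound $\Delta(\epsilon) = C_g\epsilon$ with no appeal to distortion lemmas.
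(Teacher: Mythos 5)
Your proof is correct, and it takes a more explicit route than the paper's. The paper simply invokes the bounded distortion property of the family $\Phi$, namely $\sup\left|\,|\gamma'(x)|/|\gamma'(y)|-1\,\right|<\Delta_{z}(\epsilon)$ over $x,y\in\mathcal{U}_\epsilon(z)$ and $\gamma\in\Phi$, which immediately gives $\left|\diam(h(\mathcal{U}_\epsilon(g(1))))/(\epsilon|h'(g(1))|)-1\right|<\Delta_{g(1)}(\epsilon)$, and then performs exactly the chain-rule conversion you also use, writing $|h'(g(1))|=|(hg)'(1)|/|g'(1)|$ and absorbing the factor $1/|g'(1)|$ into $\Delta$. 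You replace the quoted distortion input by the explicit two-point M\"obius computation with nonnegative matrix entries; your key observation --- that the only $h$-dependent quantities appearing are the scale-invariant ratios $c/(cp+d)\le 1/g(1)$ and $(cp+d)/(c(p-\epsilon/2)+d)\le g(1)/(g(1)-\epsilon/2)$ --- is precisely what makes both $\Delta$ and the smallness threshold for $\epsilon$ independent of $h$, which is the uniformity the application in Theorem \ref{Thm:interval} actually needs (there the estimate is summed over all $h$ with $n\to\infty$ at fixed $\epsilon$). Your version buys an explicit linear rate $\Delta(\epsilon)=C_g\,\epsilon$ and avoids any appeal to a distortion lemma; the paper's version is shorter and transfers verbatim to families with a uniform distortion property but without an explicit linear-fractional form. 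Two cosmetic corrections: elements of $\Phi$ containing an odd number of copies of $u_1$ have negative derivative, so write $|g'(1)|$ rather than $g'(1)$ throughout; and since $\mathcal{U}_\epsilon(g(1))$ is centred, the endpoint entering your denominator is $g(1)-\epsilon/2$, so the admissible range of $\epsilon$ is governed by $g(1)$ alone, which is consistent with the statement and depends only on $g$, as required.
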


\begin{proof}
By the bounded distortion property, we have  for each $ z \in (0,1)$ that there exists 
$\Delta_z: (0,1] \to \R_+$
with $\lim_{s \to 0}\Delta_z(s)=0$ such that, for each $\epsilon> 0$ sufficiently small,
\[ \sup_{x,y \in \mathcal{U}_\epsilon (z) \atop \gamma  \in \Phi} \left| \frac{|\gamma'(x)|}{|\gamma'(y)|} -1 \right| < 
\Delta_z(\epsilon).\]
This implies that for fixed $g\in \Phi$  we have, for each $h \in \Phi$ and  $\epsilon> 0$ sufficiently small,
 \[ \left| \frac{\diam (h(\mathcal{U}_{\epsilon}(g(1))))}{\epsilon |h'(g(1))|} -1 \right| < 
\Delta_{g(1)} (\epsilon).\]
From this we deduce that
\begin{eqnarray*} \left|\diam (h(\mathcal{U}_{\epsilon}(g(1)))) -\epsilon  |h'(g(1))| \right| &< & 
\epsilon \, \frac{|(hg)'(1))|}{|g'(1)|} \,   \Delta_{g(1)} (\epsilon)=\epsilon  |(hg)'(1)| \, \,  \Delta(\epsilon).\\
\end{eqnarray*}
This finishes the proof.
\end{proof}

\begin{proof}[Proof of Theorem  \ref{Thm:interval}]
Let $g \in \Phi$ be given and define, for $\epsilon >0$ sufficiently small,
\[ \mathcal{U}_{g,\epsilon,n}:=
T^{-(n-1)}\left(\mathcal{U}_\epsilon(g(1))\right).\]
Let $u_{g,\epsilon}:=1 / \mu(\mathcal{U}_\epsilon(g(1)))=
1 / \log \left((g(1)+ \epsilon  /2)/(g(1)-\epsilon /2)\right)$, and consider the measure 
$\nu_{g,\epsilon,n}$ which is given, for each $n\in\N$, by 
  \[ 
\nu_{g,\epsilon,n}= u_{g,\epsilon}\log n \cdot \lambda|_{\mathcal{U}_{g,\epsilon,n}}.\]
By Proposition \ref{thm:local}, we then have that $\starlim_{n \to \infty} \nu_{g,\epsilon,n}=\lambda$.
Also,    consider the atomic measure $\rho_{g,\epsilon,n}$ which
 is given, for each $n\in\N$, by 
\[ \rho_{g,\epsilon,n}:= u_{g,\epsilon}\log n     \sum_{f(1)\in T^{-(n-1)}(g(1))}   \epsilon \,  \frac{|f'(1)|}{|g'(1)|}  \, \cdot \delta_{f(1)}.\]
Now, observe that 
\[   \lim_{\epsilon \searrow 0}  {\epsilon u_{g,\epsilon}}=  \lim_{\epsilon \searrow 0} \frac{\epsilon}{ \log \frac{g(1)+\epsilon /2}{g(1)- \epsilon /2}}=  \lim_{\epsilon \searrow 0} \frac{\epsilon}{\epsilon/ ( g(1)-
\epsilon/2)} = g(1), \]
and let the measures $ \rho_{g,n}$ be defined by
\[ \rho_{g,n}:= g(1) \log n     \sum_{f(1)\in T^{-(n-1)}(g(1))}  \,  \frac{|f'(1)|}{|g'(1)|}  \, \cdot \delta_{f(1)}.\] 
Using  Lemma \ref{cor:GrowthRateBound} and Lemma \ref{diam}, we now obtain the following for all $x \in [0,1]$, where  
$F^{(\nu)}_{g,\epsilon,n}$,  $F^{(\rho)}_{g,\epsilon,n}$ and $F^{(\rho)}_{g,n}$ denote the distribution functions  
of the measures $\nu_{g,\epsilon,n}$,
 $ \rho_{g,\epsilon,n} $ and $\rho_{g,n}$, and where we write $a_{n} \ll b_{n}$ if $a_{n}/b_{n}$ is uniformly
bounded  from above,
\begin{eqnarray*}
 \left| F^{(\nu)}_{g, \epsilon,n}(x) -F^{(\rho)}_{g,n} (x)\right| &\leq &
\left| F^{(\nu)}_{g,\epsilon,n}(x) -F^{(\rho)}_{g,\epsilon,n} (x)\right| 
+ \left| F^{(\rho)}_{g,\epsilon,n}(x) -F^{(\rho)}_{g,n} (x)\right|  \\ 
 &\hspace{-1.6cm}\ll& \hspace{-4mm}  u_{g,\epsilon}  \log n \hspace{-5mm} \sum_{hg(1)\in T^{-(n-1)}(g(1))}  \left| \diam (h(\mathcal{U}_\epsilon(g(1))) -  \epsilon  \,  \frac{|(hg)'(1)|}{|g'(1)|} \right|\\
 &\hspace{-1.4cm}& \hspace{-5mm}  +\frac{\epsilon u_{g,\epsilon} \log n}{n^{2}}+ \,\, |g(1)-  \epsilon u_{g,\epsilon}| \log n  \sum_{f(1)\in T^{-(n-1)}(g(1))}   |f'(1)|  \\
 &\hspace{-1.3cm}\ll& \hspace{-6mm}  \left(\epsilon \,  u_{g,\epsilon}   \, \Delta(\epsilon)   +
   |g(1)-  \epsilon u_{g,\epsilon}| \right)  \, \,  \log n  \, \sum_{f(1)\in T^{-(n-1)}(g(1))}   |f'(1)| \\
 &\hspace{-1.5cm}\ll& \hspace{-6mm}    |g(1)-  \epsilon u_{g,\epsilon}|+   g(1)  \, \Delta(\epsilon).
\end{eqnarray*}
This holds for   $\epsilon>0$ arbitrary small and hence, we obtain  that  
\[ \starlim_{n \to \infty}  \rho_{g,n} = \lambda.\]
The proof of Theorem \ref{Thm:interval} now follows, if we insert in the definition of $\rho_{g,n}$ the fact that  $g(1)$ can be written in form of a reduced fraction $v/w$ and  that then $|g'(1)|=w^{-2}$, as well as similarly, that $f(1)$ can be written in form of a reduced fraction $p/q$ and  that then $|f'(1)|=q^{-2}$.
\end{proof}

\subsection{Proof  of Theorem  \ref{Thm:atomic2} (\ref{Farey})}
\begin{proof}
  Define $\mathcal{F}_{n}^{*}:=\left\{ p/n:0<p\leq n, \mbox{gcd}\left(p,n\right)=1\right\} $
and $\psi\left(n\right):=\card\left(\mathcal{F}_{n}\right)$.  We then clearly have that $\phi\left(n\right) =\card\left(\mathcal{F}_{n}^{*}\right)$ 
and that
$\psi\left(n\right)\sim n^{2}/\left(2\zeta\left(2\right)\right)$.
Next, observe that the statement in (\ref{fac:Farey}) implies that we have, for each continuous function $f:\left[0,1\right]\to\R_{\geq0}$,
  \[
\chi_{n}:=\frac{2\zeta\left(2\right)}{n^{2}}\sum_{r\in\mathcal{F}_{n}}f(r)\to \lambda(f), \hbox{ for $n$ tending
to infinity}.\]
 An application of Toeplitz's Lemma then gives that \[
\lim_{n \to \infty} \frac{1}{\log n}\sum_{k=1}^{n}\frac{1}{k}\chi_{k}= \lambda(f).\]
By  setting $f_{n}:=\sum_{p/n\in\mathcal{F}_{n}^{*}}f\left(p/n\right)$, 
we next observe that, for $n \geq 2$,  \[
\frac{1}{\log n}\sum_{k=1}^{n}\frac{1}{k}\chi_{k}= \frac{2\zeta\left(2\right)}{\log n}\sum_{k=1}^{n}\frac{1}{k^{3}}\sum_{m=1}^{k}f_{m}= \frac{2\zeta\left(2\right)}{\log n }\sum_{m=1}^{n}\sum_{k=m}^{n}\frac{1}{k^{3}}f_{m}.\]
 By comparing the sum $\sum_{k=m}^{n}k^{-3}$ with the corresponding
integral $\int_{m}^{n}x^{-3}\,\d x$, we obtain \begin{multline*}
\frac{\zeta\left(2\right)}{\log n}\sum_{q=1}^{n}\frac{f_{q}}{q^{2}}-\frac{\zeta\left(2\right)}{n^{2}\log n}\sum_{q=1}^{n}f_{q}\leq
\frac{1}{\log n }\sum_{k=1}^{n}\frac{1}{k}\chi_{k}\\
\leq\frac{\zeta\left(2\right)}{\log n}\sum_{q=1}^{n}\frac{f_{q}}{q^{2}}-\frac{\zeta\left(2\right)}{n^{2}\log n}\sum_{q=1}^{n}f_{q}+
\frac{\zeta\left(2\right)}{\log n }\sum_{q=1}^{n}\frac{f_{q}}{q^{3}}.\end{multline*}
 Finally, note that we clearly have that \[
\frac{\zeta\left(2\right)}{n^{2}\log n}\sum_{q=1}^{n}f_{q}\sim\frac{\lambda(f)}{2\log n}\]
 and  that
\[\frac{\zeta\left(2\right)}{\log n}\sum_{q=1}^{n}\frac{\phi\left(q\right)}{q^{3}}\underbrace{\frac{f_{q}}{\phi\left(q\right)}}_{\leq\left\Vert f\right\Vert _{\infty}}\leq\frac{\left\Vert f\right\Vert _{\infty}\left(\zeta\left(2\right)\right)^{2}}{\zeta\left(3\right)\log n}.\]
Hence, it now follows that \[
\lim_{n \to\infty}\frac{\zeta\left(2\right)}{\log n}\sum_{q=1}^{n}\frac{1}{q^{2}}\sum_{p/q\in\mathcal{F}_{q}^{*}}f\left(\frac{p}{q}\right)=
\lim_{n \to\infty}\frac{1}{\log n}\sum_{k=1}^{n}\frac{1}{k}\chi_{k}=\lambda(f) .\]
 This finishes the proof of the assertion  in Theorem \ref{Thm:atomic2}  (\ref{Farey}).
\end{proof}


\begin{thebibliography}{Lag92}
\bibitem[A97]{Aaronson:97} {\sf AARONSON,~J.:} \newblock \textsl{An introduction
to infinite ergodic theory}, volume~50 of {Mathematical Surveys
and Monographs}. \newblock American Mathematical Society, Providence,
RI, 1997.
 

\bibitem[B10]{inter} {\sf BOGOMOLNY,~A.:}  
\textit{Stern-Brocot Tree. Introduction}  from Interactive Mathematics Miscellany and Puzzles,
\texttt{http://www.cut-the-knot.org/blue/Stern.shtml}, Accessed September 2010.
 
\bibitem[CP88]{CodecaPerelli} {\sf CODEC,~P. -- PERELLI,~A.:} \textit{On the Uniform
Distribution (mod 1) of the Farey Fractions and $l^{p}$ Spaces}, {Math.
Ann.} \textbf{279} (1988) 413--422.

\bibitem[D90]{Dufner}{\sf  DUFNER,~G.:}, \textit{Eine Bemerkung zur Franelsumme von Folgen},
{Arch. Math.} Vol. 54 (1990) 45--51.

\bibitem[FK06]{FK} {\sf FIALA,~J. -- KLEBAN,~P.:} \newblock \textit{Intervals between
Farey fractions in the limit of infinite intervals}, \newblock {
{Preprint}}, arXiv:math-ph/0505053v2, 2006.
  
\bibitem[KSl07]{KesseboehmerSlassi:05} {\sf KESSEB\"OHMER,~M.  -- SLASSI,~M.:}
\newblock \textit{Limit laws for distorted critical waiting time processes
in infinite ergodic theory}, \newblock {Stoch. and Dynam.}
\textbf{7} , no. 1 (2007) 103--121.

\bibitem[KSl08]{KesseboehmerSlassi:08} {\sf KESSEB\"OHMER,~M.  -- SLASSI,~M.:}
\textit{A distributional limit law for the continued fraction digit sum}, {Math.
Nachr.} \textbf{281}, no. 9 (2008) 1294--1306.
 

\bibitem[KS08]{KesseboehmerStratmann08}{\sf KESSEB\"OHMER,~M. -- STRATMANN,~B.O.:}
\textit{Fractal analysis for sets of non-differentiability of Minkowski's
question mark function}, {J. of Numb. Theo.} \textbf{128} (2008)
2663--2686.

\bibitem[KS10]{KessStrat09}{\sf KESSEB\"OHMER,~M. -- STRATMANN,~B.O.:} \textit{On the Lebesgue
measure of sum-level sets for continued fractions}, Preprint: arXiv:0901.1787v2.
To appear in   {Discr. and Cont. Dyn. Syst.}, \textbf{14}  (2010). 

\bibitem[KS09]{KessStrat09b}{\sf KESSEB\"OHMER,~M. -- STRATMANN,~B.O.:} \textit{A note on
algebraic growth rates for Poincaré series of Kleinian groups}, Preprint:
arXiv:0910.5560, 2009.

\bibitem[K53]{Kopriva1} {\sf KOPRIVA,~J.} \textit{On a relation of the Farey series
to the Riemann hypothesis on the zeros of the $\zeta$ function}, (Czech)
{Cas. Pe{\'{s}}tovani Mat.} \textbf{78} (1953) 4--55.

\bibitem[K54]{Kopriva2} {\sf KOPRIVA,~J.:} \textit{Contribution to the relation of the
Farey series to the Riemann hypothesis}, (Czech) {Cas. Pe{\'{s}}tovani
Mat.} \textbf{79} (1954) 77--82.

\bibitem[Lag92]{L}{\sf LAGARIAS,~J.C.:} \newblock \textit{Number theory and dynamical systems}, \newblock{{Proc. of Symp. in Appl. Math.}},
46 (1992) 35--72.
\bibitem[Lan24]{Landau} {\sf LANDAU,~E.:} \textit{Bemerkungen zu der vorstehenden Abhandlung
von Herr Franel}, {Nachr. Ges. Wiss. Göttingen}, Math.-phys. K1 (1924)
202--206.
 
\bibitem[M48]{Mikolas}{\sf  MIKOLÁS,~M.:} \textit{On a theorem of J. Franel}, {Kgl.
Norske Videnskabers Selskabs Forhandlinger} \textbf{21}
(1948) 98--101.


\bibitem[T00]{Thaler:00} {\sf THALER,~M.:} \newblock \textit{The asymptotics of the
{P}erron-{F}robenius operator of a class of interval maps preserving
infinite measures}, \newblock { Studia Math.}, \textbf{143} (2)
(2000) 103--119. 
\bibitem[TU95]{TU} {\sf TICHY,~R.F. -- UITZ,~J.:} \newblock \textit{An extension of Minkowski's singular
function}, \newblock {{Appl. Math. Lett.}}, \textbf{8} (1995) 39--46.
\end{thebibliography}
\end{document}